\DeclareMathAlphabet{\curly}{U}{rsfs}{m}{n}
\DeclareMathOperator{\Br}{Br}
\DeclareMathOperator{\disc}{disc}
\DeclareMathOperator{\Ram}{Ram}
\DeclareMathOperator{\GL}{GL}
\DeclareMathOperator{\ind}{ind}
\DeclareMathOperator{\inv}{inv}
\DeclareMathOperator{\Res}{Res}
\DeclareMathOperator{\SL}{SL}
\DeclareMathOperator{\MM}{M}
\newtheorem{thm}{Theorem}[section]
\newtheorem{cor}[thm]{Corollary}
\newtheorem{prop}[thm]{Proposition}
\theoremstyle{definition}
\theoremstyle{remark}
\def\1{\mathbf{1}}
\def\disc{\mathrm{disc}}
\theoremstyle{remark}
\newtheorem*{ack}{Acknowledgements}
\theoremstyle{plain}
\def\1{\mathbf{1}}
\def\disc{\mathrm{disc}}
\newcommand{\frakp}{\mathfrak{p}}
\newcommand{\frakq}{\mathfrak{q}}
\newcommand{\frakP}{\mathfrak{P}}
\newcommand{\frakQ}{\mathfrak{Q}}
\newcommand{\abs}[1]{\left\vert#1\right\vert}
\newcommand{\pr}[1]{\left( #1 \right) }
\title{Brauer equivalent number fields and the geometry of quaternionic Shimura varieties}
\author{Benjamin Linowitz}
\address{Department of Mathematics\\ 
10 North Professor Street\\
Oberlin, OH 44074}
\email{benjamin.linowitz@oberlin.edu}
\begin{document}

\begin{abstract} 
Two number fields are said to be Brauer equivalent if there is an isomorphism between their Brauer groups that commutes with restriction. In this paper we prove a variety of number theoretic results about Brauer equivalent number fields (e.g., they must have the same signature). These results are then applied to the geometry of certain arithmetic locally symmetric spaces. As an example, we construct incommensurable arithmetic locally symmetric spaces containing exactly the same set of proper immersed totally geodesic surfaces.
\end{abstract}

\maketitle

\section{Introduction}

Given a Riemannian manifold $M$ it is natural to ask the extent to which the geometry of $M$ is dependent on the geodesics or totally geodesic submanifolds of $M$. This problem has received a considerable amount of attention in the case that $M$ is an arithmetic manifold. With respect to geodesics, it is a theorem of Reid \cite{R} that two arithmetic hyperbolic surfaces with the same geodesic length spectra are necessarily commensurable. This result was later generalized by Chinburg, Hamilton, Long and Reid \cite{CHLR} to hyperbolic $3$-manifolds, and by Prasad and Rapinchuk \cite{PR} to a very broad class of locally symmetric spaces.

More recently, McReynolds and Reid \cite{McReid} have proven that if two arithmetic hyperbolic $3$-manifolds have the same (nonempty) set of totally geodesic surfaces (up to commensurability) then they must in fact be commensurable. In a followup paper, McReynolds \cite{McR} showed that this behavior does not persist when one considers arithmetic manifolds with universal cover the symmetric space of products of $\SL_n(\mathbb R)$ and $\SL_n(\mathbb C)$. In particular McReynolds constructed examples of incommensurable arithmetic manifolds with precisely the same commensurability classes of totally geodesic submanifolds arising from a fixed field. McReynolds' proof was largely algebraic and reduced to constructing central simple algebras defined over number fields which were not isomorphic yet nevertheless contained precisely the same isomorphism classes of subalgebras defined over a fixed field.

To make all of this concrete, let $k$ be a number field with ring of integers $\mathcal O_k$ and $A$ be a central simple algebra over $k$ so that \[A\otimes_\mathbb Q \mathbb R \cong \MM_n(\mathbb R)^a \times \MM_n(\mathbb C)^b \times \MM_{n/2}(\mathbb H)^c\] for nonnegative integers $a,b,c$. Let $\mathcal O$ be an $\mathcal O_k$-order of $A$ and $\mathcal O^1$ the multiplicative subgroup consisting of those elements of $\mathcal O$ with reduced norm $1$. From the above isomorphism we obtain an embedding of $\mathcal O^1$ into $G_{a,b}=\SL_n(\mathbb R)^a\times \SL_n(\mathbb C)^b$. Let $\Gamma$ denote the image of $\mathcal O^1$ under this embedding. Then $\Gamma$ is an arithmetic lattice in $G_{a,b}$ and we obtain a locally symmetric space $X_{a,b}/\Gamma$, where $X_{a,b}$ is the symmetric space associated to $G_{a,b}$. We note that the commensurability class of $X_{a,b}/\Gamma$ is given by the isomorphism class of $A$. In light of this construction it is clear that in order to construct arithmetic manifolds with precisely the same commensurability classes of totally geodesic submanifolds arising from a fixed field it suffices to construct central simple algebras $A_1$ over $k_1$ and $A_2$ over $k_2$ with the property that for some field $F\subseteq k_1\cap k_2$, a central simple algebra $B$ over $F$ satisfies $B\otimes_F k_1\cong A_1$ if and only if $B\otimes_F k_2\cong A_2$. Much of McReynolds' paper was devoted to constructing algebras with this property.

In a recent paper, the author, McReynolds and Miller \cite{LMM} showed that one can construct algebras $A_1$ and $A_2$ with the aforementioned properties from any pair of number fields with isomorphic adele rings. Before describing this constriction we need to establish some notation. Given a number field $K$ we denote the Brauer group of $K$ by $\Br(K)$ and note that associated to any subfield $F\subset K$ there is a restriction homomorphism $\Res_{K/F}([B])=[B\otimes_F K]$. Suppose now that $k_1$ and $k_2$ are number fields whose adele rings are isomorphic. In \cite{LMM} it was shown that there is an isomorphism $\Phi_{\Br}: \Br(k_1)\rightarrow \Br(k_2)$ with the property that for any field $F\subset k_1\cap k_2$ and $L\supset k_1,k_2$ the following diagram commutes:
\begin{equation}\label{eqnBrauerNatural} 
\xymatrixrowsep{.2in} \xymatrixcolsep{.2in} \xymatrix{&\Br(L)&\\\Br(k_1)\ar^{\Res}[ru]\ar@{<->}[rr]^{\Phi_{\Br}}&&\Br(k_2).\ar[lu]_{\Res}\\&\Br(F)\ar[ru]_\Res \ar[lu]^\Res&}
\end{equation}

In particular this shows that we may obtain central simple algebras over $k_1$ and $k_2$ having the same isomorphism classes of subalgebras defined over $F$ by considering the pair $[A],\Phi_{\Br}([A])$ for any $[A]\in\Br(k_1)$. Call two number fields $k_1$ and $k_2$ {\bf Brauer equivalent} if there is an isomorphism $\Phi_{\Br}: \Br(k_1)\rightarrow \Br(k_2)$ for which the diagram in (\ref{eqnBrauerNatural}) commutes for all $F\subset k_1\cap k_2$ and $L\supset k_1,k_2$. 

This paper studies Brauer equivalent number fields with an eye towards geometric applications. Before describing this paper's results, we recall that the Galois kernel of a number field $k$ is the largest subfield of $k$ which is Galois over $\mathbb Q$. Algebraically, the main result of the paper is:

\begin{thm}\label{thm:main}
Let $k$ and $k'$ be Brauer equivalent number fields. The following invariants of $k$ and $k'$ coincide:
\begin{enumerate}
\item Their degrees.
\item Their signatures.
\item Their Galois kernels.
\item Their groups of roots of unity.
\end{enumerate}
\end{thm}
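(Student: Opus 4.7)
The overall strategy is to translate each of the four invariants into a Brauer-theoretic statement that is preserved by the naturality of $\Phi_{\Br}$. For each invariant $\iota(k)$, the aim is to give a description of $\iota(k)$ that is manifestly carried across by the diagram in (\ref{eqnBrauerNatural}).

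For the degree and signature (parts (i) and (ii)), I would specialize the diagram to $F = \Q$, which yields $\Phi_{\Br} \circ \Res_{k/\Q} = \Res_{k'/\Q}$ and so identifies the kernels of these two restriction maps as subgroups of $\Br(\Q)$. By Albert--Brauer--Hasse--Noether, $\alpha \in \ker(\Res_{k/\Q})$ if and only if at each place $v$ of $\Q$ the invariant $\inv_v(\alpha) \in \Br(\Q_v)$ is annihilated by $d_v(k) := \gcd_{w \mid v}[k_w:\Q_v]$. This already forces $d_v(k) = d_v(k')$ for every $v$. To upgrade from gcds of local degrees to the full multiset $\{[k_w:\Q_v]\}_{w\mid v}$, I would iterate the argument with successively larger common overfields $L$ (beginning with the compositum $L = k \cdot k'$) and compare the kernel of $\Res_{L/k} \circ \Res_{k/\Q}$ with that of $\Res_{L/\Q}$, reading off the local decomposition place by place. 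Once the multisets of local degrees agree at every $v$, summing at any one $v$ gives $[k:\Q] = [k':\Q]$, proving (i); and specializing to $v = \infty$, where the multiset consists of $r_1$ ones and $r_2$ twos, proves (ii).

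For the Galois kernel (iii), my plan is to show that any subfield $K \subseteq k$ with $K/\Q$ Galois must also embed into $k'$; by symmetry this will force $G(k) = G(k')$. The difficulty is that $K$ need not lie in $k \cap k'$ a priori, so $F = K$ cannot be used directly in the diagram. Instead I would work in a common overfield $L \supseteq k, k'$ (e.g.\ the compositum, enlarged to contain both Galois closures) and characterize "being a restriction from a Galois subfield $K$" as a Galois-equivariance property of subgroups of $\Br(L)$: since $K/\Q$ is Galois, the image $\Res_{L/K}(\Br(K))$ sits inside the $\Gal(L/\Q)$-invariants after suitable identification, and the image of $\Res_{L/k}: \Br(k) \to \Br(L)$ can be cut out by an analogous equivariance condition relative to $\Gal(L/k)$. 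Because $\Phi_{\Br}$ carries $\Res_{L/k}(\Br(k))$ onto $\Res_{L/k'}(\Br(k'))$, any such property transfers to $k'$ and forces $K \subseteq k'$. Part (iv) will then be immediate from (iii): the cyclotomic subfield $\Q(\mu(k)) \subseteq k$ is Galois over $\Q$, hence contained in $G(k)$, so $\mu(k) = \mu(G(k))$; combined with $G(k) = G(k')$ this gives $\mu(k) = \mu(k')$.

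The principal obstacle I anticipate is in (iii): producing a purely Brauer-theoretic characterization of "being a restriction from a Galois-over-$\Q$ subfield" that is robust enough to transfer across $\Phi_{\Br}$, without circular dependence on already knowing $K \subseteq k'$. The secondary obstacle, in (i)--(ii), is the combinatorial step from gcd data to the full multiset of local degrees; I expect this to require a careful, simultaneous analysis of restriction to a tower of overfields rather than a single application of the diagram.
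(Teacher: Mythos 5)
Your reduction of membership in $\Br(k/\Q)$ to the local conditions ``$\inv_v(\alpha)$ killed by $d_v(k)=\gcd_{w\mid v}[k_w:\Q_v]$'' is sound, and the resulting equality of gcds is essentially Theorem 1.4 of \cite{LMM}, which the paper itself invokes. But the gcd data does not come close to giving (i) or (ii): at $v=\infty$ the gcd only records whether the field is totally imaginary, not the signature, and at finite $v$ it does not determine the multiset $\{[k_w:\Q_v]\}$. Your proposed ``upgrade'' step is where the argument breaks: the comparison you name is vacuous, since $\Res_{L/k}\circ\Res_{k/\Q}=\Res_{L/\Q}$ by transitivity of restriction, so the two kernels you want to compare are literally the same subgroup of $\Br(\Q)$ for every overfield $L$; no iteration over overfields extracts more than relative Brauer groups and their images, and recovering the full multiset of local degrees at every place would amount to proving local (adelic) equivalence, which is stronger than the theorem and not known to follow from Brauer equivalence. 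The paper avoids this entirely: for the degree it picks $L$ Galois over both fields, takes a division algebra $D$ of index $[L:k]$ with $[D]\in\Br(L/k)$, and uses $\ind=\exp$ for number fields together with $\Phi_{\Br}(\Br(L/k))=\Br(L/k')$ to force $[L:k]\le[L:k']$ and conclude by symmetry; for the signature it uses only that $\Br(k)\cong\Br(k')$ as abstract groups, since the Albert--Brauer--Hasse--Noether sequence shows the number of real places is an isomorphism invariant of the Brauer group, which combined with (i) gives (ii).

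The gap in (iii) is the one you flag yourself, and it is genuine: the image of $\Res_{L/k}:\Br(k)\to\Br(L)$ is \emph{not} cut out by a $\Gal(L/k)$-equivariance condition (it lies in the invariants but is in general a proper subgroup), and you never produce a Brauer-theoretic characterization of ``comes from a Galois-over-$\Q$ subfield of $k$'' that transfers across $\Phi_{\Br}$; without it, the key containment $K\subseteq k'$ is not established. The paper's route is different and more elementary: it feeds the gcd statement (your own first observation, via \cite[Theorem 1.4]{LMM}) into a splitting-of-primes argument. If $p$ splits completely in $\widehat{k}$, or merely has a degree-one factor in $k$, the gcd of inertia degrees over $p$ in $k'$ is $1$, so $p$ splits completely in the Galois kernel $k_0'$ of $k'$; Chebotarev in the form of \cite[Theorem 9, p.~168]{Lang-ANT} and Brauer's theorem \cite[p.~135]{CFT} then give $k_0'\subseteq k$, hence $k_0'\subseteq k_0$ by maximality of the Galois kernel, and equality by symmetry. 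Your deduction of (iv) from (iii) coincides with the paper's. So the overall architecture (transport data through the commuting diagram with $F=\Q$ and overfields $L$) is reasonable, but as it stands parts (i), (ii) and (iii) are not proved.
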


As an immediate consequence of Theorem \ref{thm:main} we obtain:
\begin{cor}
If $k$ and $k'$ are Brauer equivalent number fields and $k/\mathbb Q$ is Galois then $k\cong k'$.
\end{cor}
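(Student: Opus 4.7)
The plan is to extract the corollary as an essentially immediate consequence of Theorem \ref{thm:main}. Fix an algebraic closure $\bar{\mathbb{Q}}$ of $\mathbb{Q}$ and view $k$ and $k'$ as subfields of $\bar{\mathbb{Q}}$, so that the Galois kernel of each (the largest subfield Galois over $\mathbb{Q}$) is an honest subfield of $\bar{\mathbb{Q}}$; this is the natural setting in which to read the diagram defining Brauer equivalence, since it already presupposes a common ambient field containing $k_1$ and $k_2$.

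The first step is to apply part (iii) of Theorem \ref{thm:main}: $k$ and $k'$ share the same Galois kernel. Since $k/\mathbb{Q}$ is Galois by hypothesis, the Galois kernel of $k$ is all of $k$, so $k$ itself is the Galois kernel of $k'$; in particular $k \subseteq k'$. (Even if one prefers to read the coincidence of Galois kernels only up to abstract isomorphism, the fact that $k$ is Galois over $\mathbb{Q}$ means every embedding of the kernel into $\bar{\mathbb{Q}}$ has image $k$, yielding the same inclusion.) The second step is to apply part (i): $[k:\mathbb{Q}] = [k':\mathbb{Q}]$. Together with the inclusion $k \subseteq k'$, this forces $k = k'$, which gives the desired isomorphism. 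Because the argument reduces to a two-line combination of parts (i) and (iii) of the main theorem, there is no substantive obstacle specific to this corollary; all of the difficulty is packaged inside Theorem \ref{thm:main} itself.
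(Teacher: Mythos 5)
Your argument is correct and is exactly the route the paper intends: the corollary is stated as an immediate consequence of Theorem \ref{thm:main}, obtained by noting that the Galois kernel of $k$ is $k$ itself, so equality of Galois kernels gives $k\subseteq k'$, and equality of degrees then forces $k=k'$. No issues.
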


These results have a number of geometric consequences for locally symmetric spaces arising from quaternion algebras. In Corollary \ref{cor2}, for instance, we prove that if $M_1$ and $M_2$ are incommensurable arithmetic hyperbolic $3$-manifolds with Brauer equivalent fields of definition $k_1$ and $k_2$  then $M_1$ and $M_2$ have no proper immersed totally geodesic surfaces in common. On the other hand, in Theorem \ref{locallysymmetricconstruction} we construct incommensurable arithmetic locally symmetric spaces containing exactly the same set of proper immersed totally geodesic surfaces. This provides a striking contrast to the behavior of arithmetic hyperbolic $3$-manifolds and refines the work of McReynolds \cite{McR}, where the manifolds constructed were only shown to have the same commensurability classes of totally geodesic submanifolds.

\begin{ack}The author thanks Thomas Shemanske for helpful conversation regarding the proof of Theorem \ref{maxorders}. The author was partially supported by the U.S. National Science Foundation grants DMS 1107452, 1107263, 1107367 ``RNMS: Geometric Structures and Representation Varieties'' (the GEAR Network) and a Simons Collaboration Grant.
\end{ack}

\section{Preliminaries and notation}

In this section we will describe the notation that will be used throughout this paper.

Let $k$ be a number field with ring of integers $\mathcal O_k$. We will denote the number of real (respectively complex) places of $k$ by $r_1$ (respectively $r_2$). The signature of $k$ is the pair $(r_1,r_2)$. Note that if $n=[k:\mathbb Q]$ then $n=r_1+2r_2$. We denote by $\mathscr P_k$ the set of all primes (finite or infinite) of $k$. We will denote the Galois closure of $k$ by $\widehat{k}$ and the Galois kernel of $k$ by $k_0$. We remind the reader that the Galois kernel of a number field $k/\mathbb Q$ is the largest subfield of $k$ which is Galois over $\mathbb Q$. 

Given an extension $L/k$ of number fields and a prime $\mathfrak P\in\mathscr P_L$ there is a unique prime $\mathfrak p\in\mathscr P_k$ lying beneath $\mathfrak P$ (i.e., $\mathfrak P \cap \mathcal O_k = \mathfrak p$). In this situation we write $\mathfrak P\mid \mathfrak p$. 

We will denote the Brauer group of $k$ by $\Br(k)$, and if $L$ is a finite degree extension of $k$ then we will denote the relative Brauer group of $L/k$ by $\Br(L/k)$. This is defined to be the kernel of the restriction map $\Res_{L/k} : \Br(k) \rightarrow \Br(L).$ 

We will typically use $A$ or $B$ to denote central simple algebras over $k$ and $D$ to denote a central division algebra over $k$. Given a central simple algebra $A$ over $k$ there is a unique central division algebra $D$ over $k$ and nonnegative integer $r$ such that $A\cong \MM_r(D)$. Therefore \[\dim_k(A)=r^2\cdot \dim_k(D).\] We define the {\bf degree} of $A$ to be $\sqrt{\dim_k(A)}$ and denote it by $[A:k]$. We define the (Schur) {\bf index} of $A$, denoted $\ind(A)$, to be $[D:k]$. That is, the index of $A$ is the degree of its underlying central division algebra.

Given a central simple algebra $A$, we will denote by $A^\times$ the multiplicative group of invertible elements of $A$ and by $A^1$ the multiplicative group of elements of $A$ with reduced norm $1$. Let $A$ be a central simple algebra over $k$ of dimension $n^2$. Given a prime $\mathfrak p\in\mathscr P_k$ we have an associated central simple algebra $A_\mathfrak p = A\otimes_k k_\mathfrak p$ over $k_\mathfrak p$. If $A_\mathfrak p\cong \MM_n(k_\mathfrak p)$ then we say that the prime $\mathfrak p$ is {\bf unramified} (or split) in $A$. Otherwise $A_\mathfrak p \cong \MM_m(D_\mathfrak p)$ for some positive integer $m<n$ and central division algebra $D_\mathfrak p$ over $k_\mathfrak p$. In this case we say that $\mathfrak p$ is {\bf ramified} in $A$. We will denote by $\Ram(A)$ the set of primes of $k$ which ramify in $A$, by $\Ram_f(A)$ the set of finite primes of $k$ ramifying in $A$ and by $\Ram_\infty(A)$ the set of infinite primes of $k$ that ramify in $A$. 

We now discuss the local invariants of $A$. Let $\mathfrak p\in \mathscr P_k$. We begin with the case that $\mathfrak p$ is an infinite prime of $k$ so that $k_\mathfrak p=\mathbb R$ or $k_\mathfrak p=\mathbb C$. In this case we define the local invariant of $A$ at $\mathfrak p$, denoted $\inv_\mathfrak p (A)$, by $\inv_\mathfrak p(A)=0$ when $k_\mathfrak p = \mathbb C$,  $\inv_\mathfrak p(A)=0$ when $k_\mathfrak p = \mathbb R$ and $A_\mathfrak p \cong \MM_n(\mathbb R)$ and $\inv_\mathfrak p(A)=\frac{1}{2}$ when $k_\mathfrak p = \mathbb R$ and $A_\mathfrak p \cong \MM_{\frac{n}{2}}(\mathbb H)$. Now suppose that $\mathfrak p$ is a finite prime of $k$. In this case there is an isomorphism $\inv: \Br(k_\mathfrak p)\rightarrow \mathbb Q/\mathbb Z$ (see \cite[Theorem 31.8]{Reiner}). We define the invariant of $A$ at $\mathfrak p$ by $\inv_\mathfrak p(A)=\inv([A_\mathfrak p])$. Finally, if $L_\mathfrak P/k_\mathfrak p$ is a finite extension and $A_\mathfrak p$ is a central simple algebra over $k_\mathfrak p$ then by \cite[Theorem 31.9]{Reiner} we have \[\inv_\mathfrak P(A_\mathfrak p \otimes_{k_\mathfrak p} L_\mathfrak P)=[L_\mathfrak P: k_\mathfrak p]\cdot \inv_\mathfrak p(A_\mathfrak p).\] 

\subsection{Quaternionic Shimura varieties}

In this subsection we review the construction of arithmetic lattices acting on products of hyperbolic planes and $3$-dimensional hyperbolic spaces. For a complete treatment we refer the reader to \cite[Section 3]{B}.

Let $k$ be a number field with ring of integers $\mathcal O_k$ and signature $(r_1,r_2)$ and recall that $[k:\mathbb Q]=r_1+2r_2$. If $A$ is a quaternion algebra over $k$ which is not totally definite (i.e., there exists an infinite prime of $k$ which splits in $A$) then there exist non-negative integers $r,s$ with $r+s=r_1$ such that \[A\otimes_{\mathbb Q} \mathbb R \cong \MM_2(\mathbb R)^s \times \mathbb H^r \times \MM_2(\mathbb C)^{r_2}.\] This isomorphism induces an embedding \[\pi: A^\times \hookrightarrow \prod_{\nu\not\in \Ram_{\infty}(A)} (A\otimes_k k_\nu)^\times \longrightarrow \GL_2(\mathbb R)^s \times \GL_2(\mathbb C)^{r_2}.\] Restricting to the subgroup $A^1$ of elements of $A$ with reduced norm $1$ yields an embedding \[\pi: A^1 \hookrightarrow \SL_2(\mathbb R)^{s} \times \SL_2(\mathbb C)^{r_2}.\]

If $\mathcal O$ is a maximal order of $A$ and $\mathcal O^1$ the multiplicative group of those elements of $\mathcal O$ with reduced norm $1$ then the image $\pi(\mathcal O^1)$ of $\mathcal O^1$ in $\SL_2(\mathbb R)^{s} \times \SL_2(\mathbb C)^{r_2}$ is a  lattice by the work of Borel and Harish-Chandra \cite{BHC}. We will denote this lattice by $\Gamma_\mathcal O$. We define an irreducible lattice in $\SL_2(\mathbb R)^{s} \times \SL_2(\mathbb C)^{r_2}$ to be {\bf arithmetic} if it is commensurable with a lattice of the form $\Gamma_{\mathcal O}$. Note that the arithmetic lattice will be cocompact if and only if its associated quaternion algebra is not isomorphic to the matrix algebra $\MM_2(k)$.

Let $G=\SL_2(\mathbb R)^{s} \times \SL_2(\mathbb C)^{r_2}$ and $K$ be a maximal compact subgroup of $G$. If $\Gamma$ is an arithmetic lattice of $G$ then the quotient space $M=\Gamma\backslash G/K$ is an arithmetic locally symmetric space called a {\bf quaternionic Shimura variety}. We call $k$ the {\bf invariant trace field} of $M$ and $A$ the {\bf invariant quaternion algebra} of $M$.

If $M_1$ and $M_2$ are quaternionic Shimura varieties then we say that $M_1$ and $M_2$ are {\bf commensurable} if they share an isometric finite-sheeted covering. It is well-known that $M_1$ and $M_2$ are commensurable if and only if they have isomorphic invariant trace fields and invariant quaternion algebras (c.f. \cite[Theorem 8.4.1]{MR}).

There are two special cases of the above construction that are worth highlighting.  If $k$ is a totally real number field for which $G=\SL_2(\mathbb R)$ (i.e., $r=r_1-1$ and $s=1$) then we call any lattice commensurable with $\Gamma_{\mathcal O}$ an {\bf arithmetic Fuchsian group}. If $\Gamma_{\mathcal O}$ is torsion-free then the resulting quotient space $M$ is an {\bf arithmetic hyperbolic surface}. If $k$ is a number field containing a unique complex place and $G=\SL_2(\mathbb C)$ (i.e., $r=r_1$ and $s=0$) then we call any lattice commensurable with $\Gamma_{\mathcal O}$ an {\bf arithmetic Kleinian group} and $M$ an {\bf arithmetic hyperbolic $3$-manifold} when $\Gamma_{\mathcal O}$ is torsion-free.

\section{Proof of Theorem \ref{thm:main}}

We will begin by proving that if $k$ and $k'$ are Brauer equivalent then $[k:\mathbb Q]=[k':\mathbb Q]$. Let $L$ be a number field containing $k$ and $k'$ such that $L/k$ and $L/k'$ are both Galois extensions. Because $k$ and $k'$ are Brauer equivalent there exists an isomorphism $\Phi_{\mathrm{Br}}: \mathrm{Br}(k)\rightarrow \mathrm{Br}(k')$ for which the following diagram commutes:

\begin{equation}\label{BrauerNatural} 
\xymatrixrowsep{.2in} \xymatrixcolsep{.2in} \xymatrix{&\Br(L)&\\\Br(k)\ar^{\Res_L}[ru]\ar@{->}[rr]^{\Phi_{\Br}}&&\Br(k')\ar[lu]_{\Res_L}\\}
\end{equation}

In particular this implies that $\Phi_{\Br}(\Br(L/k))=\Br(L/k')$. Let $D$ be a division algebra over $k$ of degree $[D:k]=[L:k]$ such that $[D]\in\Br(L/k)$. By \cite[Chapter 13.5]{P}, there exists a central simple algebra $S$ over $k'$ with $[S:k']=[L:k']$ and $S\in\Phi_{\Br}([D])$. Observe that 
\begin{align*}
[L:k] &= [D:k]\\
&= \ind([D]) \\
&= \exp([D]) \\
&= \exp(\Phi_{\Br}([D])) \\
&= \ind(\Phi_{\Br}([D])) \\
&\leq [S:k] \\ 
&= [L:k'].
\end{align*}

Here we have used the fact that if $K$ is a number field and $[A]\in\Br(K)$ then $\ind([A])=\exp([A])$ \cite[Theorem 32.19]{Reiner}, where $\ind([D])$ is the Schur index of $D$ and $\exp([D])$ is the order of $[D]$ in $\Br(K)$. This shows that $[L:k]\leq [L:k']$. Interchanging the roles of $k$ and $k'$ proves that $[L:k]=[L:k']$. We now see that \[[k:\mathbb Q]=[L:\mathbb Q]/[L:k]=[L:\mathbb Q]/[L:k']=[k':\mathbb Q].\]

That $k$ and $k'$ have the same degrees and isomorphic Brauer groups imply that their signatures are the same, since the exact sequence of Brauer groups in class field theory \cite[(32.13)]{Reiner} implies that if $\Br(k)\cong \Br(k')$ then $k$ and $k'$ have the same number of real places.

We now show that $k$ and $k'$ have the same Galois kernels. Let $k_0$ denote the Galois kernel of $k$ and $k'_0$ denote the Galois kernel of $k'$. Let $\widehat{k}$ denote the Galois closure over $\mathbb Q$ of $k$. We claim that $k'_0\subset \widehat{k}$. To see this we will use a fact proven in \cite[Theorem 1.4]{LMM}: If $k$ and $k'$ are Brauer equivalent number fields then for every prime $p$ which is unramified in both $k$ and $k'$ we have 
\[\gcd(f(v_1/p),\dots,f(v_g/p))=\gcd(f(v'_1/p),\dots,f(v'_{g'}/p)),\]
where $v_1,\dots,v_g$ are the places of $K$ lying above $p$ and $v'_1,\dots,v'_{g'}$ are the places of $k'$ lying above $p$. Here $f(v_i/p)$ (respectively $f(v'_i/p)$) is the inertia degree of $v_i$ (respectively $v'_i$) over $p$. Suppose that $p$ is a prime that is unramified in $k$ and $k'$ and splits completely in $\widehat{k}/\mathbb Q$. Then $p$ also splits completely in $k/\mathbb Q$, hence $\gcd(f(v_1/p),\dots,f(v_g/p))=\gcd(f(v'_1/p),\dots,f(v'_{g'}/p))=1$. Let $v'_0$ be a prime of $k'_0$ lying above $p$. Then $f(v'_0/p)$ divides $f(v'/p)$ for all primes $v'$ of $k'$ lying above $p$, hence $f(v'_0/p)=1$. Because $k'_0/\mathbb Q$ is Galois this implies that $p$ splits completely in $k'_0/\mathbb Q$. It follows from \cite[Theorem 9, p. 168]{Lang-ANT} that $k'_0$ is contained in $\widehat{k}$, proving the claim.

Brauer's theorem \cite[p. 135]{CFT} now implies that if every prime $p$ having a degree one factor in $k$ splits completely in $k_0'/\mathbb Q$ (up to a set of primes with density $0$) then $k'_0\subset k$. In our situation having a degree one factor in $k$ forces \[\gcd(f(v_1/p),\dots,f(v_g/p))=1=\gcd(f(v'_1/p),\dots,f(v'_{g'}/p)),\] and consequently implies that $p$ must split completely in $k'_0/\mathbb Q$. This shows that $k_0'\subset k$. Since $k_0$ and $k'_0$ are both Galois extensions of $\mathbb Q$ contained in $k$, their compositum must be a Galois extension of $\mathbb Q$ contained in $k$. Since $k_0$ is by definition the largest Galois extension of $\mathbb Q$ contained in $k$, this implies that the compositum of $k_0$ and $k'_0$ is equal to $k_0$, hence $k'_0\subset k_0$. By symmetry $k_0\subset k'_0$ and $k_0=k'_0$.

That $k$ and $k'$ have the same groups of roots of unity follows from their having the same Galois kernels because the roots of unity contained in each field generates a Galois extension.

\section{Totally geodesic surfaces of arithmetic hyperbolic $3$-manifolds}

In this section we prove that incommensurable arithmetic hyperbolic $3$-manifolds defined over Brauer equivalent number fields never have any proper immersed totally geodesic surfaces in common. Our proof will make use of the following algebraic result.

\begin{thm}\label{tensorupthm}
Suppose that $F$ is a number field and $B_0/F$ is a quaternion algebra. Let $k, k'/F$ be Galois extensions of degree $2^n$ and define $A_1=B_0\otimes_F k$ and $A_2=B_0\otimes_F k'$. If  $B\otimes_F k\cong A_1$ if and only if $B\otimes_F k'\cong A_2$ for every quaternion algebra $B/F$ with $\Ram_\infty(B)=\Ram_\infty(B_0)$ then $k\cong k'$ and $A_1\cong A_2$.
\end{thm}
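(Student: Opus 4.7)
The plan is to translate the hypothesis into the language of relative Brauer groups, reduce it to a statement about completely split primes in $k$ versus $k'$, and then apply the same Bauer-type argument used in the proof of Theorem~\ref{thm:main}. Since $B\otimes_F k$ and $A_1=B_0\otimes_F k$ are both quaternion algebras over $k$, they are isomorphic as $k$-algebras iff $[B\otimes_F k]=[A_1]$ in $\Br(k)$, i.e., iff $[B]-[B_0]\in\Br(k/F)$; similarly $B\otimes_F k'\cong A_2$ iff $[B]-[B_0]\in\Br(k'/F)$. The condition $\Ram_\infty(B)=\Ram_\infty(B_0)$ translates exactly to $\inv_v([B]-[B_0])=0$ for every infinite place $v$ of $F$. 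Conversely, every $2$-torsion class $\beta\in\Br(F)$ trivial at infinity can be realized as $[B]-[B_0]$ for some quaternion $B/F$ with $\Ram_\infty(B)=\Ram_\infty(B_0)$, because over a number field every $2$-torsion Brauer class is represented by a quaternion algebra with prescribed local invariants. So the hypothesis becomes: for every $\beta\in\Br(F)[2]$ trivial at infinity, $\beta\in\Br(k/F)$ iff $\beta\in\Br(k'/F)$.

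Such a class $\beta$ is determined by its finite support $S$ at the finite primes of $F$, and $|S|$ is necessarily even because the local invariants sum to zero. From the formula $\inv_w(\Res_{k/F}\beta)=[k_w:F_v]\cdot\inv_v(\beta)$, the class $\beta$ lies in $\Br(k/F)$ iff $[k_w:F_v]$ is even for every $v\in S$ and every $w\mid v$. Because $k/F$ is Galois of $2$-power degree $2^n$, the local degree is a power of $2$ depending only on $v$, so ``even'' is equivalent to ``$v$ does not split completely in $k$''. Letting $\Sigma_k\subset\mathscr P_F$ denote the set of finite primes of $F$ that split completely in $k$, the hypothesis amounts to: for every finite set $S$ of finite primes of $F$ with $|S|$ even, $S\cap\Sigma_k=\emptyset$ iff $S\cap\Sigma_{k'}=\emptyset$.

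Specializing to $|S|=2$ I will conclude $\Sigma_k=\Sigma_{k'}$. The case $n=0$ is vacuous, so take $n\geq 1$. If $\Sigma_k\neq\Sigma_{k'}$, choose (WLOG) $v_1\in\Sigma_k\setminus\Sigma_{k'}$. By Chebotarev, $\Sigma_k$ and $\Sigma_{k'}$ each have density $2^{-n}$ and $\Sigma_k\cap\Sigma_{k'}=\Sigma_{kk'}$ has density $1/[kk':F]$, so inclusion-exclusion gives $\Sigma_k\cup\Sigma_{k'}$ density strictly less than $1$. Pick a finite prime $v_2\notin\Sigma_k\cup\Sigma_{k'}$; then $S=\{v_1,v_2\}$ meets $\Sigma_k$ but not $\Sigma_{k'}$, contradicting the hypothesis. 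Once $\Sigma_k=\Sigma_{k'}$ is established, the Bauer-type argument used in the proof of Theorem~\ref{thm:main} (via \cite[p.~135]{CFT}) forces $k\cong k'$ as $F$-algebras, and any $F$-isomorphism $k\xrightarrow{\sim}k'$ induces an $F$-isomorphism $A_1=B_0\otimes_F k\xrightarrow{\sim}B_0\otimes_F k'=A_2$.

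The main obstacle I expect is the reformulation in the first paragraph: carefully confirming that as $B$ runs over the quaternion algebras with the prescribed archimedean ramification, the class $[B]-[B_0]$ ranges over exactly the $2$-torsion of $\Br(F)$ trivial at infinity. The subtle direction is surjectivity, which rests on the standard fact that quaternion algebras over a number field realize arbitrary $2$-torsion Brauer classes with arbitrary specified local invariants (consistent with the product formula). Once this cohomological translation is in hand, the remaining steps are a routine unwinding of local degrees plus a Chebotarev density count.
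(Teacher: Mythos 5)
Your proposal is correct, but it is packaged differently from the paper's argument, so a brief comparison is worth recording. The paper works with a single, explicitly constructed auxiliary algebra: it picks two primes $\nu_1,\nu_2$ of $F$ that do not split completely in $k/F$ and are unramified in $B_0$, sets $\Ram_f(B)=\Ram_f(B_0)\cup\{\nu_1,\nu_2\}$ with $\Ram_\infty(B)=\Ram_\infty(B_0)$, verifies by comparing ramification sets (using the odd-local-degree lemma from \cite{LS} to handle primes of $\Ram(B_0)$) that $B\otimes_F k\cong A_1$, and then deduces from $B\otimes_F k'\cong A_2$ that $\nu_1,\nu_2$ are non-split in $k'$; Chebotarev plus equality of degrees finishes. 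You instead reformulate the hypothesis wholesale: the realization step showing that $[B]-[B_0]$ sweeps out all of $\Br(F)[2]$ trivial at infinity (correct by the local-global description of quaternion algebras, and not needed in the paper) turns the hypothesis into "a $2$-torsion class trivial at infinity lies in $\Br(k/F)$ iff it lies in $\Br(k'/F)$," hence into a statement about even sets of finite primes missing $\Sigma_k$ versus $\Sigma_{k'}$; the contradiction then comes from pairing a hypothetical $v_1\in\Sigma_k\setminus\Sigma_{k'}$ with a prime split in neither field, whose existence is your inclusion--exclusion density count (valid for $n\ge 1$; note $n=0$ is trivial rather than vacuous). Your route buys a cleaner statement, no case analysis over $\Ram(B_0)$, no citation of \cite{LS}, and exact equality $\Sigma_k=\Sigma_{k'}$ rather than one containment plus a degree count; its costs are the extra surjectivity step and the density computation, which the paper sidesteps by choosing both auxiliary primes non-split in $k$. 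Both proofs ultimately run on the same engine: in a Galois extension of degree $2^n$ the local degree is even unless the prime splits completely, together with the parity constraint on ramification sets and a Chebotarev/Bauer comparison of split sets, and your concluding observation that an $F$-isomorphism $k\to k'$ carries $A_1$ to $A_2$ supplies the final claim that the paper leaves implicit.
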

\begin{proof}
Let $\nu_1,\nu_2$ be primes of $F$ which do not split completely in $k/F$ and do not ramify in $B_0$. We will show that $\nu_1$ and $\nu_2$ do not split completely in $k'/F$ either. It follows that the set of primes of $F$ which split completely in $k'/F$ is (with at most a finite number of exceptions) a subset of the set of primes of $F$ which split completely in $k/F$. A standard consequence of the Chebotarev density theorem (cf. \cite[Thm 9, p.~168]{Lang-ANT}) then shows that $k$ is isomorphic to a subfield of $k'$, hence the two fields are themselves isomorphic since their degrees over $F$ are equal.

Define a quaternion algebra $B$ over $F$ by setting $\Ram_\infty(B)=\Ram_\infty(B_0)$ and $\Ram_f(B)=\Ram_f(B_0)\cup \{\nu_1, \nu_2\}$. We claim that if $\omega$ is a prime of $k'$ lying above $\nu_1$ then $\inv_\omega(A_2)=0$ in $\mathbb Q/\mathbb Z$. Indeed, this follows from

\begin{align}\label{invs}
\begin{split}
\inv_\omega(A_2) &= \inv_\omega(B_0\otimes_F k') \\
&= \inv_{\nu_1}(B_0)\cdot [k'_\omega : F_{\nu_1}] \\
&= [k'_\omega : F_{\nu_1}],
\end{split}\end{align}

where the last equality follows from the fact that $\nu_1\not\in\Ram(B_0)$, hence $\inv_{\nu_1}(B_0)$ is trivial in $\mathbb Q/\mathbb Z$. Since $[k'_\omega : F_{\nu_1}]$ is an integer, the claim follows.

We now show that $B\otimes_F k\cong A_1$. It suffices to show that $\Ram_f(B\otimes_F k)=\Ram_f(A_1)$. By hypothesis $A_1\cong B_0\otimes_F k$, hence we may show that $\Ram_f(B\otimes_F k)=\Ram_f(B_0\otimes_F k)$. Suppose first that $\frakP$ is a prime of $k$ lying in $\Ram_f(B_0\otimes_F k)$ and let $\frakp$ be the prime of $F$ lying beneath $\frakP$. Then \[\frac{1}{2}=\inv_\frakP(B_0\otimes_F k)=\inv_\frakp(B_0)\cdot [k_\frakP:F_\frakp].\] By Lemma 3.2 of \cite{LS}, the local degree $ [k_\frakP:F_\frakp]$ is odd, hence $\inv_\frakp(B_0)=\frac{1}{2}$ and $\frakp\in\Ram(B_0)$. Since $\Ram(B)$ contains $\Ram(B_0)$, $\frakp$ ramifies in $B$. Then \[\inv_\frakP(B\otimes_F k)=\inv_\frakp(B)\cdot [k_\frakP:F_\frakp]=\frac{1}{2}\cdot [k_\frakP:F_\frakp]\] is nontrivial in $\mathbb Q/\mathbb Z$. This shows that $\Ram_f(B_0\otimes_F k)\subseteq \Ram_f(B\otimes_F k)$. 

We now show the reverse containment. Suppose that $\frakQ\in\Ram_f(B\otimes_F k)$ and let $\frakq$ be the prime of $F$ lying beneath $\frakQ$. The argument above shows that $\frakq\in\Ram(B)$. If $\frakq\in\Ram(B_0)$ then it is necessarily the case that $\frakQ\in\Ram_f(B_0\otimes_F k)$. Assume therefore that $\frakq\not\in \Ram(B_0)$. Because $\Ram_f(B)=\Ram_f(B_0)\cup \{\nu_1, \nu_2\}$ it must be the case that $\frakq\in\{\nu_1,\nu_2\}$. We now obtain a contradiction as \[\inv_\frakQ(B\otimes_F k)=\inv_\frakp(B)\cdot [k_\frakQ:F_\frakp]\in \mathbb Z\] since $k/F$ is a Galois extension of degree $2^n$, hence the local degree is even if it is not $1$ (in which case it corresponds to a prime of $F$ which splits completely in $k/F$, which contradicts the way we chose $\nu_1$ and $\nu_2$). This shows that $A_1\cong B\otimes_F k$.

By hypothesis we have $A_2\cong B\otimes_F k'$. Let $\omega$ be a prime of $k'$ lying above $\nu_1$. By (\ref{invs}), the local invariant $\inv_\omega(A_2)$ is trivial in $\mathbb Q/\mathbb Z$, giving us \[\inv_\omega(A_2)=\inv_\frakp(B)\cdot [k'_\omega:F_{\nu_1}]=\frac{1}{2}\cdot [k'_\omega:F_{\nu_1}]\in \mathbb Z.\] This forces the local degree $[k_\omega:F_{\nu_1}]$ to be even and allows us to conclude that $\nu_1$ does not split completely in $k'/F$. As the argument that $\nu_2$ does not split completely in $k'/F$ is identical, we have completed the proof of the theorem.
\end{proof}

\begin{cor}\label{cor1}If $k$ and $k'$ are Brauer equivalent number fields then there is no subfield $F\subset k\cap k'$ over which $k$ and $k'$ are Galois with relative degree a power of $2$.\end{cor}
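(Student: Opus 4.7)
The plan is to derive Corollary \ref{cor1} as a direct consequence of Theorem \ref{tensorupthm} by observing that Brauer equivalence automatically supplies the tensor-up hypothesis of that theorem. Suppose, toward contradiction, that a subfield $F\subset k\cap k'$ exists such that both $k/F$ and $k'/F$ are Galois of relative degree $2^n$ with $n\geq 1$. The equality of the two relative degrees is automatic from Theorem \ref{thm:main}(i), which gives $[k:\mathbb Q]=[k':\mathbb Q]$ and hence $[k:F]=[k':F]$.

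Fix any quaternion algebra $B_0$ over $F$ and set $A_1=B_0\otimes_F k$, $A_2=B_0\otimes_F k'$. First I would verify the hypothesis of Theorem \ref{tensorupthm}: for every quaternion algebra $B/F$ with $\Ram_\infty(B)=\Ram_\infty(B_0)$, one has $B\otimes_F k\cong A_1$ if and only if $B\otimes_F k'\cong A_2$. The commutative diagram (\ref{eqnBrauerNatural}) defining Brauer equivalence, applied over $F$, gives $\Phi_{\Br}([C\otimes_F k])=[C\otimes_F k']$ for every central simple algebra $C$ over $F$; applying this to $C=B$ and $C=B_0$, and using that $\Phi_{\Br}$ is a group isomorphism, one obtains $[B\otimes_F k]=[A_1]$ in $\Br(k)$ if and only if $[B\otimes_F k']=[A_2]$ in $\Br(k')$. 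The desired equivalence of isomorphisms then follows because two central simple algebras of the same finite dimension over a common field are isomorphic if and only if they represent the same Brauer class — here both pairs of algebras are $4$-dimensional, the underlying division algebra in each Brauer class being unique.

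Applying Theorem \ref{tensorupthm} now yields $k\cong k'$ as $F$-algebras. Because $k$ and $k'$ are Galois extensions of $F$ sitting inside a common algebraic closure of $F$, any $F$-isomorphism between them forces $k=k'$ as subfields, contradicting the implicit assumption that $k$ and $k'$ are distinct, and hence ruling out the existence of such an $F$.

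I do not foresee a substantive obstacle: the whole content of the argument is that Brauer equivalence, which is \emph{a priori} only a statement about Brauer classes, upgrades to a statement about algebra isomorphisms when restricted to algebras of a fixed small dimension — here, quaternion algebras. Once that translation is made, Corollary \ref{cor1} drops out of Theorem \ref{tensorupthm} with no further calculation.
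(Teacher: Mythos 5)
Your proof is correct and takes essentially the same route as the paper: verify the hypothesis of Theorem \ref{tensorupthm} by applying the commutative diagram (\ref{eqnBrauerNatural}) over $F$ and using that quaternion algebras over a common field lying in the same Brauer class are isomorphic, then invoke the theorem. The only differences are cosmetic: the paper takes $B_0$ to be a division algebra (which is not actually needed), and it leaves implicit the concluding step you make explicit, namely that an $F$-isomorphism $k\cong k'$ between Galois extensions in a common closure forces $k=k'$, contradicting the tacit assumption that the two Brauer equivalent fields are distinct.
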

\begin{proof}
Let $F$ be a subfield of $k\cap k'$ such that the relative extensions $k/F$ and $k'/F$ are Galois of degree $2^n$. Let $B_0$ be a quaternion division algebra over $F$ and define $A_1=B_0\otimes_F k$ and $A_2=B_0\otimes_F k'$. 

Commutativity of the following diagram implies that $\Phi_{\Br}([A_1])=[A_2]$: 
\begin{equation*}
\xymatrix{&\\\Br(k)\ar@{<->}[rr]^{\Phi_{\Br}}&&\Br(k')\\&\Br(F)\ar[ru]_{\Res_{k'}} \ar[lu]^{\Res_k}&}
\end{equation*}

In particular this diagram and the uniqueness of the division algebra representative of a Brauer class shows that if $B$ is a quaternion algebra over $F$ then $A_1\cong B\otimes_F k$ if and only if $A_2\cong B\otimes_F k'$. The corollary now follows from Theorem \ref{tensorupthm}.
\end{proof}

\begin{cor}\label{cor2}
If $M_1$ and $M_2$ are incommensurable arithmetic hyperbolic $3$-manifolds with Brauer equivalent invariant trace fields $k_1$ and $k_2$  then $M_1$ and $M_2$ have no proper immersed totally geodesic surfaces in common.
\end{cor}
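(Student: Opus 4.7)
The plan is to reduce the geometric statement to the algebraic rigidity provided by Theorem \ref{tensorupthm}. First I will invoke the standard arithmetic dictionary (see \cite[Chapter 9]{MR}): in an arithmetic hyperbolic $3$-manifold $M_i$ with invariant trace field $k_i$ and invariant quaternion algebra $A_i$, each commensurability class of proper immersed totally geodesic surfaces is encoded by a pair $(F_i, B_i)$ in which $F_i \subset k_i$ is a totally real subfield of index $2$ and $B_i$ is a quaternion $F_i$-algebra whose base change satisfies $B_i \otimes_{F_i} k_i \cong A_i$, with $B_i$ ramified at all real places of $F_i$ save one.

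I will then suppose toward a contradiction that $M_1$ and $M_2$ share such a surface $\Sigma$. Since $\Sigma$ carries its own intrinsic invariant trace field and invariant quaternion algebra, the pairs realizing $\Sigma$ inside $M_1$ and $M_2$ must coincide: there is a single totally real field $F \subset k_1 \cap k_2$ with $[k_i : F] = 2$ for both $i$, and a single quaternion $F$-algebra $B$ with $B \otimes_F k_i \cong A_i$ for $i = 1, 2$.

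Next, I apply Theorem \ref{tensorupthm} with $B_0 = B$ and $n = 1$ (any degree-$2$ extension is automatically Galois). To verify the hypothesis, let $B'$ be any quaternion $F$-algebra with $\Ram_\infty(B') = \Ram_\infty(B)$. Brauer equivalence of $k_1$ and $k_2$, together with commutativity of the diagram (\ref{eqnBrauerNatural}) over $F$, yields $\Phi_{\Br}([B' \otimes_F k_1]) = [B' \otimes_F k_2]$ and $\Phi_{\Br}([A_1]) = [A_2]$; since a quaternion algebra is uniquely determined by its Brauer class, the required ``if and only if'' statement follows. Theorem \ref{tensorupthm} then concludes $k_1 \cong k_2$ and $A_1 \cong A_2$, which by \cite[Theorem 8.4.1]{MR} forces $M_1$ and $M_2$ to be commensurable, contradicting the hypothesis.

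The main obstacle I expect is the initial reduction, namely verifying carefully that a genuinely shared totally geodesic surface produces a common pair $(F, B)$ rather than merely two abstractly commensurable pairs $(F_i, B_i)$. This comes down to a bookkeeping argument with the invariant trace field and invariant quaternion algebra of $\Sigma$ itself, but it is the one genuinely geometric ingredient in what is otherwise a purely algebraic argument.
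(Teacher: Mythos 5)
Your proposal is correct and takes essentially the same route as the paper: pass from the shared surface to a common totally real field $F$ with $[k_1:F]=[k_2:F]=2$ (hence Galois), verify the ``if and only if'' hypothesis of Theorem \ref{tensorupthm} via the Brauer-equivalence diagram, and contradict incommensurability. The only cosmetic difference is that you instantiate Theorem \ref{tensorupthm} directly with the surface's own quaternion algebra, whereas the paper quotes Corollary \ref{cor1}, whose proof performs the same verification with an arbitrary quaternion division algebra over $F$.
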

\begin{proof}
Suppose that $M_1$ and $M_2$ are as in the statement of the corollary and that they both contain an immersed totally geodesic surface $S$. It follows from \cite[Chapter 9.5]{MR} that $S$ is arithmetic and that the invariant trace field of $S$ if a totally real number field $F$ satisfying $[k:F]=2=[k':F]$. Therefore $F=k\cap k'$ and $k/F, k'/F$ are quadratic extensions, hence Galois. Corollary \ref{cor2} now follows from Corollary \ref{cor1}.
\end{proof}

\section{Maximal orders}

In this section we prove a variety of algebraic results concerning maximal orders in quaternion algebras defined over Brauer equivalent number fields. These results will be applied in Section \ref{section:locallysymmetric} when we construct incommensurable arithmetic locally symmetric spaces containing exactly the same set of proper immersed totally geodesic surfaces.

\begin{prop}\label{samealgebras}
Let $K_1$ and $K_2$ be Brauer equivalent number fields and $F\subset K_1\cap K_2$ a common subfield. Let $B_0$ be a quaternion algebra over $F$ and define $A_1=B_0\otimes_F K_1$ and $A_2=B_0\otimes_F K_2$. Assume that $A_1$ and $A_2$ are division algebras. If $B$ is a quaternion algebra over $F$ then $A_1\cong B\otimes_F K_1$ if and only if $A_2\cong B\otimes_F K_2$.
\end{prop}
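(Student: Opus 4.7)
The plan is to reduce the proposition to a direct application of the naturality diagram defining Brauer equivalence, combined with the uniqueness (up to isomorphism) of the division algebra representative of a given Brauer class. The hypothesis that $F\subset K_1\cap K_2$ is exactly what allows us to use the bottom triangle of the diagram in (\ref{eqnBrauerNatural}).

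First I would apply the commuting diagram to $[B_0]\in\Br(F)$ to identify the images of the tensor products under $\Phi_{\Br}$:
\[\Phi_{\Br}([A_1]) = \Phi_{\Br}(\Res_{K_1/F}([B_0])) = \Res_{K_2/F}([B_0]) = [A_2].\]
Now suppose $B$ is a quaternion algebra over $F$ with $A_1\cong B\otimes_F K_1$, so that $[A_1]=\Res_{K_1/F}([B])$ in $\Br(K_1)$. Applying $\Phi_{\Br}$ and using the commuting diagram once more with $[B]\in\Br(F)$ gives
\[[B\otimes_F K_2] = \Res_{K_2/F}([B]) = \Phi_{\Br}(\Res_{K_1/F}([B])) = \Phi_{\Br}([A_1]) = [A_2].\]

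Next I would conclude by invoking uniqueness of division algebra representatives. By hypothesis $A_2$ is a quaternion division algebra over $K_2$, so $[A_2]$ is nontrivial in $\Br(K_2)$ and has a unique division algebra representative of degree $2$. The algebra $B\otimes_F K_2$ is a $4$-dimensional central simple algebra over $K_2$ whose Brauer class equals $[A_2]\neq 0$, hence it is not split; the only non-split central simple algebra of dimension $4$ over a field is a quaternion division algebra. Therefore $B\otimes_F K_2$ is itself a quaternion division algebra, and since it shares its Brauer class with $A_2$, we obtain $B\otimes_F K_2\cong A_2$. The converse direction is obtained by reversing the argument using $\Phi_{\Br}^{-1}$.

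The proposition is essentially formal once Brauer equivalence is in hand, so there is no substantive obstacle. The only point that really uses any algebraic input (beyond the commuting diagram) is the final dimension/division-algebra argument, and even that is a standard consequence of Wedderburn's theorem together with the assumption that $A_1$ and $A_2$ are division algebras; this assumption is what rules out the possibility that $B\otimes_F K_2$ could be the split algebra $\MM_2(K_2)$ while still matching $[A_2]$.
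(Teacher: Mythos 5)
Your proposal is correct and follows essentially the same route as the paper: apply the commuting diagram to $[B_0]$ and to $[B]$ to get $[B\otimes_F K_2]=\Phi_{\Br}([A_1])=[A_2]$, then invoke uniqueness of the division algebra representative of a Brauer class (the paper states this directly for the pair of quaternion algebras, while you spell out via Wedderburn that $B\otimes_F K_2$ is itself division, which is the same point made explicit). No gaps.
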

\begin{proof}
Suppose that $B$ is a quaternion algebra over $F$ such that $A_1\cong B\otimes_F K_1$. We will show that $A_2\cong B\otimes_F K_2$. This suffices to prove the theorem as the same argument, with the roles of $A_1$ and $A_2$ reversed will prove the other direction. Because $K_1$ and $K_2$ are Brauer equivalent there is an isomorphism $\Phi_{\Br}: \Br(K_1)\rightarrow \Br(K_2)$ which commutes with extension of scalars. It follows that \[\Phi_{\Br}([A_1])=\Phi_{\Br}([B_0\otimes_F K_1])=[B_0\otimes_F K_2]=[A_2].\] The same reasoning implies that \[[A_2]=\Phi_{\Br}([A_1])=\Phi_{\Br}([B\otimes_F K_1])=[B\otimes_F K_2].\] Since $A_2$ and $B\otimes_F K_2$ are both quaternion algebras over $K_2$ ,with the former a division algebra, and every Brauer class contains a unique division algebra representative, we conclude that $A_2\cong B\otimes_F K_2$, as desired.
\end{proof}

\begin{prop}\label{typenumber}
Let $K$ be a number field, $A$ be a quaternion algebra over $K$ which is unramified at some infinite prime of $K$ and $\mathcal O$ be a maximal order in $A$. If $K$ has narrow class number one then every maximal order in $A$ is conjugate to $\mathcal O$.
\end{prop}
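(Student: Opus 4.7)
My plan is to reduce the global conjugacy problem for maximal orders to one about the reduced norm in a suitable quotient of the finite ideles, which can then be trivialized using the narrow class number hypothesis together with the Hasse--Schilling--Maass norm theorem and strong approximation for the algebraic group $A^1$ (which is available precisely because $A$ is unramified at some infinite place of $K$). Throughout I write $\widehat{A}^\times$, $\widehat{\mathcal O}$, $\widehat{\mathcal O}^\times$, $\widehat{K}^\times$, etc., for the usual finite-idelic or profinite completions. The first step is to invoke the local classification of maximal orders in a quaternion algebra over a non-archimedean local field: at a finite prime where $A$ is ramified there is a unique maximal order, and at a split finite prime any two maximal orders are conjugate in the local algebra. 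Applied to $\mathcal O$ and an arbitrary second maximal order $\mathcal O'$, this produces $g\in\widehat{A}^\times$ with $g_\mathfrak p=1$ for almost all $\mathfrak p$ and $g\widehat{\mathcal O}g^{-1}=\widehat{\mathcal O}'$. Because units of $\widehat{\mathcal O}$ normalize $\widehat{\mathcal O}$ under conjugation, showing that $\mathcal O$ and $\mathcal O'$ are globally conjugate reduces to showing that $g$ lies in $A^\times\cdot\widehat{\mathcal O}^\times$ (with $A^\times$ embedded diagonally).

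I would then pass to the reduced norm. The narrow class number one hypothesis gives $\widehat{K}^\times=K^\times_+\cdot\widehat{\mathcal O}_K^\times$, where $K^\times_+$ denotes the elements of $K^\times$ positive at every real place of $K$. Writing $n(g)=\gamma u$ accordingly, the positivity of $\gamma$ at every real place, in particular at every place in $\Ram_\infty(A)$, allows me to apply the Hasse--Schilling--Maass norm theorem to find $\alpha\in A^\times$ with $n(\alpha)=\gamma$. The reduced norm is also surjective from $\mathcal O_\mathfrak p^\times$ onto $\mathcal O_{K,\mathfrak p}^\times$ at each finite prime (obvious at split primes, a consequence of local class field theory at ramified primes), so $u=n(\epsilon)$ for some $\epsilon\in\widehat{\mathcal O}^\times$. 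Setting $g_1=\alpha^{-1}g\epsilon^{-1}$, the corrected idele has trivial reduced norm at every finite place and still satisfies $g_1\widehat{\mathcal O}g_1^{-1}=\alpha^{-1}\widehat{\mathcal O}'\alpha$.

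Finally, strong approximation for $A^1$ gives $\widehat{A}^1=A^1\cdot\widehat{\mathcal O}^1$, so $g_1=\beta\eta$ for some $\beta\in A^1\subset A^\times$ and $\eta\in\widehat{\mathcal O}^1$. Since $\eta$ normalizes $\widehat{\mathcal O}$, it follows that $\beta\widehat{\mathcal O}\beta^{-1}=\alpha^{-1}\widehat{\mathcal O}'\alpha$, and hence the global element $\alpha\beta\in A^\times$ conjugates $\mathcal O$ to $\mathcal O'$. The main delicacy I anticipate is the careful interlocking of the three inputs---narrow class number one, Hasse--Schilling--Maass, and strong approximation---each of which cancels a distinct piece of the adele-to-global obstruction; dropping either of the stated hypotheses leaves a genuine class-group-style invariant that generally obstructs global conjugacy, consistent with the fact that the type numbers of quaternion algebras can be strictly greater than one.
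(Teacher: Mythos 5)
Your argument is correct, and it is a genuinely different (more self-contained) route than the one in the paper. The paper's proof is essentially a citation: it invokes Section 3 of \cite{L}, where the type number of $A$ is shown to equal the degree $[K(A):K]$ of an explicit class field $K(A)$ that is $2$-elementary, unramified outside $\Ram_\infty(A)$, and in which the finite primes of $\Ram(A)$ split completely; since $K(A)$ sits inside the narrow class field, narrow class number one forces $K(A)=K$ and hence type number one. You instead run the classical adelic computation directly: local uniqueness/conjugacy of maximal orders puts the two orders in the same $\widehat{A}^\times$-conjugacy class, the narrow class number hypothesis splits $\mathrm{nrd}(g)$ as a totally positive global element times a finite unit idele, Hasse--Schilling--Maass and local surjectivity of the reduced norm on maximal-order units absorb those two factors into $A^\times$ and $\widehat{\mathcal O}^\times$ respectively, and strong approximation for $A^1$ (this is exactly where the hypothesis that $A$ is unramified at some infinite prime is needed, as you note) absorbs the remaining norm-one idele. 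Each step checks out, including the final passage from the equality of completions $(\alpha\beta)\widehat{\mathcal O}(\alpha\beta)^{-1}=\widehat{\mathcal O}'$ to global conjugacy via the local--global correspondence for lattices. The trade-off: the paper's citation buys brevity and, importantly, the precise class-field description of the type number, which is reused later (the fields $K_1(A)$, $K_1(A_1)$, etc., in the construction of the infinitely many pairs of locally symmetric spaces), whereas your proof is independent of \cite{L} and only establishes what the proposition asserts, namely that the type number is $1$ under the stated hypotheses, while making transparent exactly which hypothesis cancels which piece of the adelic obstruction.
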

\begin{proof}
Section 3 of \cite{L} shows that the number of conjugacy classes of maximal orders in $A$ is equal to the degree over $K$ of a certain class field $K(A)$. This class field is defined as the maximal abelian extension of $K$ which has a $2$-elementary Galois group, is unramified outside of the real places of $K$ which ramify in $A$, and in which all finite primes of $K$ that ramify in $A$ split completely. It follows that $K(A)$ is contained in the narrow class field of $K$, which by hypothesis is equal to $K$. The theorem follows.
\end{proof}

\begin{thm}\label{maxorders}
Let $K/F$ be an extension of number fields, let $A$ be a quaternion algebra over $K$, $B$ be a quaternion algebra over $F$ and suppose that $A\cong B\otimes_F K$. Identify $B$ with its image in $A$. If $\mathcal O$ is a maximal order of $A$ then $\mathcal O_B=\mathcal O\cap B$ is a maximal order of $B$ and $\mathcal O_B\otimes_{\mathcal O_F} \mathcal O_K \cong \mathcal O$.
\end{thm}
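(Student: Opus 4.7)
The plan is to prove the theorem by localization at each prime $\mathfrak{p}$ of $F$. Both the maximality of an $\mathcal{O}_F$-order in $B$ and the equality of two $\mathcal{O}_K$-lattices in $A$ can be checked after completing at every prime of $F$, so it is enough to establish the corresponding local statement and then reassemble.

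As a preliminary step I would verify that $\mathcal{O}_B=\mathcal{O}\cap B$ is at least an $\mathcal{O}_F$-order of $B$: it is plainly a unital $\mathcal{O}_F$-subalgebra, and any $F$-basis of $B$ may be rescaled by a common denominator in $\mathcal{O}_F$ to lie in $\mathcal{O}$, which forces $\mathcal{O}_B$ to have full $\mathcal{O}_F$-rank inside $B$.

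Now fix a prime $\mathfrak{p}$ of $F$. Writing $K\otimes_F F_\mathfrak{p}=\prod_{\mathfrak{P}\mid\mathfrak{p}}K_\mathfrak{P}$ gives $A\otimes_F F_\mathfrak{p}=\prod_\mathfrak{P} A_\mathfrak{P}$ with $A_\mathfrak{P}=B_\mathfrak{p}\otimes_{F_\mathfrak{p}}K_\mathfrak{P}$ and with the image of $B_\mathfrak{p}$ sitting diagonally. The $\mathfrak{p}$-completion of $\mathcal{O}$ decomposes as $\prod_\mathfrak{P}\mathcal{O}_\mathfrak{P}$ with each $\mathcal{O}_\mathfrak{P}$ a maximal order of $A_\mathfrak{P}$. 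Thus the local statement to be proved is that $\left(\prod_\mathfrak{P}\mathcal{O}_\mathfrak{P}\right)\cap B_\mathfrak{p}$ is a maximal $\mathcal{O}_{F_\mathfrak{p}}$-order of $B_\mathfrak{p}$, and that its base-change to $\prod_\mathfrak{P}\mathcal{O}_{K_\mathfrak{P}}$ over $\mathcal{O}_{F_\mathfrak{p}}$ recovers $\prod_\mathfrak{P}\mathcal{O}_\mathfrak{P}$.

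I would finish by a case analysis on $B_\mathfrak{p}$. If $B_\mathfrak{p}$ is a division algebra then its unique maximal order consists of exactly the $\mathcal{O}_{F_\mathfrak{p}}$-integral elements; since integrality of an element of $B_\mathfrak{p}$ over $\mathcal{O}_{F_\mathfrak{p}}$ and over each $\mathcal{O}_{K_\mathfrak{P}}$ agree, this ring coincides with each $\mathcal{O}_\mathfrak{P}\cap B_\mathfrak{p}$, and the base-change identity then follows from uniqueness of the maximal order on both sides. When $B_\mathfrak{p}\cong \mathrm{M}_2(F_\mathfrak{p})$ is split, maximal orders correspond to $\mathcal{O}_{F_\mathfrak{p}}$-lattices in $F_\mathfrak{p}^2$; here I would exhibit a single $\mathcal{O}_{F_\mathfrak{p}}$-lattice $L\subset F_\mathfrak{p}^2$ whose extension $L\otimes_{\mathcal{O}_{F_\mathfrak{p}}}\mathcal{O}_{K_\mathfrak{P}}$ is the lattice defining $\mathcal{O}_\mathfrak{P}$ for each $\mathfrak{P}\mid\mathfrak{p}$, and then check that the stabiliser of $L$ in $B_\mathfrak{p}$ is precisely the diagonal intersection. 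The main obstacle is exactly this lattice-descent step: producing $L$ compatibly at all $\mathfrak{P}\mid\mathfrak{p}$ is where the hypothesis that the embedding $B_\mathfrak{p}\hookrightarrow \prod_\mathfrak{P} A_\mathfrak{P}$ is diagonal must be exploited, and from which both the maximality of $\mathcal{O}_B$ and the tensor-product identity fall out.
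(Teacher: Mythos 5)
Your plan takes a genuinely different route from the paper's proof, which never localizes: there the identity $(\mathcal{O}\cap B)\otimes_{\mathcal{O}_F}\mathcal{O}_K\cong\mathcal{O}$ is obtained from a flatness-and-intersection computation, and maximality of $\mathcal{O}\cap B$ is then deduced by comparing discriminants with a maximal order containing it. Your reduction to a statement at each prime $\mathfrak{p}$ of $F$ is sound, as is the preliminary verification that $\mathcal{O}\cap B$ is an $\mathcal{O}_F$-order. The gap is that the local statement you reduce to cannot be established: the lattice-descent step you flag as the ``main obstacle'' is a genuine obstruction, not a detail to be supplied later. For a general maximal order $\mathcal{O}$ no $\mathcal{O}_{F_\mathfrak{p}}$-lattice $L$ with the required property exists. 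Concretely, take $F=\mathbb{Q}$, $K=\mathbb{Q}(i)$, $B=\MM_2(\mathbb{Q})$ embedded standardly in $A=\MM_2(K)$, and $\mathcal{O}=g\MM_2(\mathbb{Z}[i])g^{-1}$ with $g=\diag(1,1+i)$. A direct computation gives $\mathcal{O}\cap B=\left(\begin{smallmatrix}\mathbb{Z}&\mathbb{Z}\\2\mathbb{Z}&\mathbb{Z}\end{smallmatrix}\right)$, the Eichler order of level $2$, which is not maximal, and its $\mathbb{Z}[i]$-span inside $A$ (which has reduced discriminant $2\mathbb{Z}[i]$) is not $\mathcal{O}$. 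The same misalignment occurs at a prime $\mathfrak{p}$ that splits in $K/F$: there your intersection is $\End(L_1)\cap\End(L_2)$ for two local lattices that need not be homothetic, hence is in general an Eichler order of positive level rather than a maximal order.

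Your division-algebra case contains a further slip: if $B_\mathfrak{p}$ is division, $A_\mathfrak{P}=B_\mathfrak{p}\otimes_{F_\mathfrak{p}}K_\mathfrak{P}$ need not be (it splits whenever $[K_\mathfrak{P}:F_\mathfrak{p}]$ is even), so ``uniqueness of the maximal order on both sides'' is unavailable; and even when $A_\mathfrak{P}$ remains division, comparing reduced discriminants ($\mathfrak{d}(\Lambda_\mathfrak{p}\otimes\mathcal{O}_{K_\mathfrak{P}})=\mathfrak{p}\mathcal{O}_{K_\mathfrak{P}}=\mathfrak{P}^{e}$ versus $\mathfrak{P}$ for the maximal order) shows the base-change identity fails whenever $K_\mathfrak{P}/F_\mathfrak{p}$ is ramified. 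What your localization makes visible is that the conclusion genuinely constrains $\mathcal{O}$: it can only hold for maximal orders that are locally aligned with an order of $B$ (for instance $\mathcal{O}$ conjugate to $\Lambda\otimes_{\mathcal{O}_F}\mathcal{O}_K$ for a maximal order $\Lambda$ of $B$), so some hypothesis of this kind must be added before your local analysis can be completed. Note also that the paper's own flatness step forms the intersection $(\mathcal{O}\otimes_{\mathcal{O}_F}\mathcal{O}_K)\cap(B\otimes_{\mathcal{O}_F}\mathcal{O}_K)$, whose natural ambient module is $A\otimes_F K$ rather than $A$; the example above is exactly the point where that identification, and hence the unrestricted statement, breaks down.
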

\begin{proof}
We begin by showing that $(\mathcal O\cap B)\otimes_{\mathcal O_F} \mathcal O_K\cong \mathcal O$. As $\mathcal O$ and $B$ are $\mathcal O_F$-modules and $\mathcal O_K$ is flat over $\mathcal O_F$ we have that tensor products with $\mathcal O_K$ commute with (finite) intersections,  yielding
\begin{align*}
(\mathcal O\cap B)\otimes_{\mathcal O_F} \mathcal O_K &\cong (\mathcal O\otimes_{\mathcal O_F} \mathcal O_K)\cap (B\otimes_{\mathcal O_F}\mathcal O_K) \\
&\cong \mathcal O \cap (B\otimes_{F}(F \otimes_{\mathcal O_F} \mathcal O_K))\\
&\cong \mathcal O \cap (B\otimes_F K)\\
&\cong \mathcal O \cap A \\
&= \mathcal O.
\end{align*}

We now show that  the order $\mathcal O\cap B$ is a maximal order of $B$. Let $\Lambda$ be a maximal order of $B$ containing $\mathcal O\cap B$. In light of the previous paragraph we see that $\Lambda\otimes_{\mathcal O_F} \mathcal O_K \cong \mathcal O$. In order to show that $\mathcal O \cap B=\Lambda$ it suffices to show that the discriminants $d(\mathcal O \cap B)$ and $d(\Lambda)$ of these orders are equal (by \cite[exer. 3, p. 131]{Reiner}). Exercise 6 of \cite[p. 131]{Reiner} shows that because $(\mathcal O\cap B)\otimes_{\mathcal O_F}\mathcal O_K$ and $\Lambda\otimes_{\mathcal O_F}\mathcal O_K$ are isomorphic, we have that $d(\mathcal O \cap B)\mathcal O_K=d(\Lambda)\mathcal O_K$. This shows that the discriminants of $\mathcal O \cap B$ and $\Lambda$ (these discriminants are ideals of $\mathcal O_F$) extend to the same ideal of $\mathcal O_K$. But different ideals of $\mathcal O_F$ must extend to different ideals of $\mathcal O_K$, hence $d(\mathcal O \cap B)=d(\Lambda)$ and $\Lambda=\mathcal O\cap B$ is maximal.\end{proof}

\section{Shimura varieties containing the same immersed totally geodesic surfaces}\label{section:locallysymmetric}

Let $K_1=(\sqrt[8]{-6561})$ and $K_2=(\sqrt[8]{-16\cdot 6561})$. It was shown by de Smit and Perlis \cite[p. 214]{DP} that $K_1$ and $K_2$ are non-isomorphic number fields with isomorphic adele rings, hence by Theorem 1.1 of \cite{LMM} they are Brauer equivalent. We also note that the only subfield that $K_1$ and $K_2$ have in common is $\mathbb Q$. Below we will also make use of the observation that $\mathbb Q$ is the only totally real subfield of $K_1$ and of $K_2$.  Finally, using the computer algebra system SAGE \cite{sage} one can check that both $K_1$ and $K_2$ have narrow class number one. As a consequence, Theorem \ref{typenumber} implies that in every quaternion algebra over $K_1$ and $K_2$, all maximal orders are conjugate.

Let $A_1=\MM_2(K_1)$ and $A_2=\MM_2(K_2)$. Denote by $\mathcal O_{K_1}$ and $\mathcal O_{K_2}$ the rings of integers of $K_1$ and $K_2$ and define $\mathcal O_1=\MM_2(\mathcal O_{K_1})$ and $\mathcal O_2=\MM_2(\mathcal O_{K_2})$. These are maximal orders in $A_1$ and $A_2$.  Because $K_1$ and $K_2$ have signature $(0,4)$ there are isomorphisms \[A_1\otimes_{\mathbb Q} \mathbb R \cong \MM_2(\mathbb C)^4 \cong A_2\otimes_{\mathbb Q} \mathbb R.\] Denote by $\Gamma_1$ and $\Gamma_2$ the images in $\SL_2(\mathbb C)^4$ of $\SL_2(\mathcal O_{K_1})$ and $\SL_2(\mathcal O_{K_2})$. The lattices $\Gamma_1$ and $\Gamma_2$ are arithmetic, non-cocompact and have finite covolume.

Let $G=\SL_2(\mathbb C)^4$, $K$ be a maximal compact subgroup of $G$ and define $M_1=\Gamma_1\backslash G/K$ and  $M_2=\Gamma_2\backslash G/K$. The orbifolds $M_1$ and $M_2$ are (incommensurable) arithmetic locally symmetric spaces.

\begin{thm}\label{locallysymmetricconstruction}
The arithmetic locally symmetric spaces $M_1$ and $M_2$ contain exactly the same set of proper immersed totally geodesic surfaces.
\end{thm}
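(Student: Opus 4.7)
The plan is to parameterize the proper immersed totally geodesic surfaces of $M_i$ by indefinite quaternion algebras over $\mathbb{Q}$, and then apply Brauer equivalence together with the narrow class number one hypothesis to show that the resulting collection of surfaces is the same for $i=1$ and $i=2$.

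First I would argue that any proper immersed totally geodesic surface $S \subset M_i$ is an arithmetic hyperbolic surface whose invariant trace field $F$ must be a totally real subfield of $K_i$; by the observation that $\mathbb{Q}$ is the only totally real subfield of $K_i$, this forces $F = \mathbb{Q}$. The invariant quaternion algebra $B$ of $S$ is thus an indefinite quaternion algebra over $\mathbb{Q}$ satisfying $B \otimes_{\mathbb{Q}} K_i \cong A_i = \MM_2(K_i)$, that is, $B$ is split by $K_i$. Conversely, every such $B$ yields a totally geodesic immersion into $M_i$ via the diagonal inclusion $B \otimes_{\mathbb{Q}} \mathbb{R} \cong \MM_2(\mathbb{R}) \hookrightarrow \MM_2(\mathbb{C})^4 \cong A_i \otimes_{\mathbb{Q}} \mathbb{R}$ combined with an embedding $B \hookrightarrow A_i$. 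Brauer equivalence now matches the two lists of algebras: the isomorphism $\Phi_{\Br}: \Br(K_1) \to \Br(K_2)$ commutes with restriction from $\Br(\mathbb{Q})$, so $\Phi_{\Br}([B \otimes_{\mathbb{Q}} K_1]) = [B \otimes_{\mathbb{Q}} K_2]$, and in particular $B \otimes K_1$ splits if and only if $B \otimes K_2$ splits.

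The more delicate step is to verify that the same $B$ gives rise to the same surface in both $M_1$ and $M_2$, and not merely to a commensurable one. Fix $B$ in the common set and a maximal $\mathbb{Z}$-order $\mathcal{O}_B$ of $B$; since $B$ is indefinite over $\mathbb{Q}$, $\mathcal{O}_B$ is unique up to conjugation, so the Fuchsian group $\bar{\Gamma}_B \subset \PSL_2(\mathbb{R})$ coming from $\mathcal{O}_B^1$, and hence the surface $S_B = \bar{\Gamma}_B \backslash \mathbb{H}^2$, is intrinsically defined and independent of $i$. To realize $S_B$ as an immersed totally geodesic surface in $M_i$, fix any embedding $\phi_i: B \hookrightarrow A_i$: Theorem \ref{maxorders} shows that $\phi_i(\mathcal{O}_B) \cdot \mathcal{O}_{K_i}$ is a maximal $\mathcal{O}_{K_i}$-order of $A_i$, and Proposition \ref{typenumber}, applicable because $K_i$ has narrow class number one, implies it is $A_i^\times$-conjugate to $\mathcal{O}_i = \MM_2(\mathcal{O}_{K_i})$. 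After adjusting $\phi_i$ by this conjugation we may assume $\phi_i(\mathcal{O}_B^1) \subseteq \mathcal{O}_i^1$, which produces the desired immersion $S_B \hookrightarrow M_i$. The main obstacle is precisely this matching at the level of individual surfaces rather than commensurability classes; without narrow class number one, different maximal orders in $A_i$ could yield non-isometric surfaces from the same $B$ and break the correspondence. Once this is secured, the map $B \mapsto S_B$ provides an isometry-class-preserving bijection between the proper immersed totally geodesic surfaces of $M_1$ and those of $M_2$.
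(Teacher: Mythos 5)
Your middle steps are sound and match the paper's ingredients: the invariant trace field of any totally geodesic surface is a totally real subfield of $K_i$, hence $\mathbb{Q}$; Brauer equivalence shows a quaternion algebra $B/\mathbb{Q}$ is split by $K_1$ if and only if it is split by $K_2$; and Theorem \ref{maxorders} together with Proposition \ref{typenumber} (narrow class number one) lets you conjugate $\mathcal{O}_B\otimes_{\mathbb{Z}}\mathcal{O}_{K_i}$ into $\mathcal{O}_i=\MM_2(\mathcal{O}_{K_i})$. The genuine gap is your final step: you assert that the proper immersed totally geodesic surfaces of $M_i$ are precisely the surfaces $S_B=\bar{\Gamma}_B\backslash\mathbb{H}^2$ coming from unit groups of maximal orders, so that matching the set of algebras $B$ matches the sets of surfaces. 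That classification is false (and is nowhere justified in your write-up): a single $B$ accounts for infinitely many pairwise non-isometric surfaces immersed in $M_1$ --- for instance every finite-index subgroup $\Gamma\leq\Gamma_{\mathcal{O}_B}$ gives a finite cover of $S_B$ that still properly immerses totally geodesically, and more generally the orbifold fundamental group of a surface in $M_1$ with invariant algebra $B$ is just some Fuchsian subgroup of $\Gamma_1$, not the group $\mathcal{O}_B^1$ itself. Your bijection $B\mapsto S_B$ therefore does not account for the surfaces the theorem is about, and nothing in the proposal shows that an arbitrary such surface of $M_1$ also immerses in $M_2$.

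The paper avoids this by never classifying the surfaces: it starts from an arbitrary surface $S\subset M_1$ with orbifold fundamental group $\Gamma\subset\Gamma_1$, uses Theorem \ref{maxorders} to get $\Gamma\subset\Gamma_{\mathcal{O}_B}$ with $\mathcal{O}_B=\mathcal{O}_1\cap B$, and then transfers the whole group: since $B\otimes_{\mathbb{Q}}K_2\cong A_2$ and $\mathcal{O}_B\subset\mathcal{O}_B\otimes_{\mathbb{Z}}\mathcal{O}_{K_2}$, Proposition \ref{typenumber} lets one conjugate so that $\mathcal{O}_B\subset\mathcal{O}_2$, whence $\Gamma\subset\Gamma_{\mathcal{O}_B}\subset\Gamma_2$ and the very same surface $S$ (not merely one commensurable with it, and not only the distinguished $S_B$) immerses in $M_2$; symmetry finishes the proof. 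Your own conjugation step (adjusting the embedding $\phi_2$ so that $\phi_2(\mathcal{O}_B^1)\subset\mathcal{O}_2^1$) is exactly the right tool, but you must apply it starting from the order $\mathcal{O}_1\cap B$ attached to the given surface and carry the arbitrary subgroup $\Gamma$ along, rather than concluding via the claimed bijection with the surfaces $S_B$.
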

\begin{proof}
Suppose that $S$ is a totally geodesic surface in $M_1$ and denote by $\Gamma$ the orbifold fundamental group of $S$. Then $\Gamma\subset \Gamma_1$. We will show that $\Gamma\subset \Gamma_2$. Let $B$ denote the invariant quaternion algebra of $\Gamma$ and $F$ denote invariant quaternion algebra of $\Gamma$. The field $F$ is a totally real subfield of $K_1$, hence by the observation we made above $F=\mathbb Q$. As it is necessarily the case that $B\otimes_F K_1\cong A_1$, we conclude by Theorem \ref{maxorders} the intersection $\mathcal O_B=\mathcal O_1 \cap B$ is a maximal order of $B$ for which $\mathcal O_B\otimes_{\mathbb Z} \mathcal O_{K_1} \cong \mathcal O_1$. It follows that $\Gamma \subset \Gamma_{\mathcal O_B} \subset \Gamma_1$. 

We now show that (up to isomorphism) $\Gamma$ is contained in $\Gamma_2$. Because $K_1$ and $K_2$ are Brauer equivalent and $[B\otimes_F K_1]=[A_1]=[\MM_2(K_1)]$ is trivial in $\Br(K_1)$, it must be the case that $[B\otimes_F K_2]$ is trivial in $\Br(K_2)$. In particular this shows that $B\otimes_F K_2\cong \MM_2(K_2)=A_2$. Identifying $B$ with its image in $A_2$ we have that the maximal order $\mathcal O_B$ is contained in the order $\mathcal O_B\otimes_{\mathbb Z} \mathcal O_{K_2}$ of $A_2$. Because every maximal order of $K_2$ is conjugate (by Theorem \ref{typenumber}), we conclude that (perhaps upon conjugating $\mathcal O_B$) we have $\mathcal O_B \subset \mathcal O_2$, hence $\Gamma_{\mathcal O_B} \subset \Gamma_2$. As $\Gamma \subset \Gamma_{\mathcal O_B}$, this proves that $\Gamma\subset \Gamma_2$. This shows that every proper immersed totally geodesic surface of $M_1$ is also a proper immersed totally geodesic surface of $M_2$. Repeating the same argument with the roles of $M_1$ and $M_2$ reversed completes the proof of the theorem.
\end{proof}

\begin{thm}
Let $K_1$ and $K_2$ be non-isomorphic Brauer equivalent number fields which are primitive (i.e., have no proper subfields other than $\mathbb Q$). There are infinitely many pairs of incommensurable arithmetic locally symmetric spaces defined over $K_1$ and $K_2$ that contain exactly the same set of proper immersed totally geodesic surfaces. 
\end{thm}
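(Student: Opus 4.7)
The plan is to generalize the construction of Theorem \ref{locallysymmetricconstruction} by replacing the single choice $B_0 = \MM_2(\mathbb{Q})$ with an infinite family of quaternion algebras $B_0$ over $\mathbb{Q}$. For each such $B_0$ I would set $A_i^{(B_0)} = B_0 \otimes_\mathbb{Q} K_i$, pick maximal orders $\mathcal{O}_i^{(B_0)} \subset A_i^{(B_0)}$, and form the arithmetic locally symmetric spaces $M_i^{(B_0)} = \Gamma_{\mathcal{O}_i^{(B_0)}} \backslash G / K$. Because $A_1^{(B_0)}$ and $A_2^{(B_0)}$ are obtained by base change from the same $B_0$ and $K_1,K_2$ share a signature by Theorem \ref{thm:main}, the ambient real Lie group $G$ is identical for $i=1,2$.

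The primitivity hypothesis enters twice. First, it forces $K_1 \cap K_2 = \mathbb{Q}$ and makes $\mathbb{Q}$ the unique totally real subfield of each $K_i$, so that every totally geodesic surface of $M_i^{(B_0)}$ has invariant trace field $\mathbb{Q}$ and invariant quaternion algebra $B$ defined over $\mathbb{Q}$. Second, the compatibility $\Phi_{\Br}([A_1^{(B_0)}]) = [A_2^{(B_0)}]$ supplied by Brauer equivalence, together with Proposition \ref{samealgebras} (or, when $A_i^{(B_0)} \cong \MM_2(K_i)$, the defining diagram of Brauer equivalence used directly), shows that for any quaternion algebra $B/\mathbb{Q}$ one has $B \otimes_\mathbb{Q} K_1 \cong A_1^{(B_0)}$ if and only if $B \otimes_\mathbb{Q} K_2 \cong A_2^{(B_0)}$. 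With these two ingredients one runs the argument of Theorem \ref{locallysymmetricconstruction} essentially verbatim---using Theorem \ref{maxorders} to recover from $\mathcal{O}_i^{(B_0)} \cap B$ a maximal order $\mathcal{O}_B$ of $B$---to conclude that $M_1^{(B_0)}$ and $M_2^{(B_0)}$ contain the same proper immersed totally geodesic surfaces.

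For the ``infinitely many pairs'' assertion, I would let $B_0$ range over an infinite family of pairwise non-isomorphic quaternion algebras over $\mathbb{Q}$ of the same infinite signature---distinguished, say, by finite ramification at disjoint pairs of primes unramified in $K_1 K_2$, so that their base changes to $K_i$ remain pairwise non-isomorphic. Different $B_0$ then yield non-isomorphic invariant quaternion algebras $A_i^{(B_0)}$ on each side, placing distinct pairs into distinct commensurability classes; within a single pair, the incommensurability of $M_1^{(B_0)}$ and $M_2^{(B_0)}$ is automatic because $K_1 \not\cong K_2$ are their invariant trace fields.

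The main obstacle is the maximal-order transport step at the heart of Theorem \ref{locallysymmetricconstruction}. There, the narrow class number one hypothesis on $K_1, K_2$ was used via Proposition \ref{typenumber} to conclude that every maximal order of $A_i^{(B_0)}$ is conjugate to the chosen $\mathcal{O}_i^{(B_0)}$, which in turn justified conjugating $B$ inside $A_2^{(B_0)}$ so that $\mathcal{O}_B \otimes_\mathbb{Z} \mathcal{O}_{K_2}$ coincides with $\mathcal{O}_2^{(B_0)}$. Under the mere primitivity hypothesis this type number can exceed one, so I expect the fix to be to restrict the infinite family of $B_0$ to those for which the type number of $A_i^{(B_0)}$ is one on both sides---for instance by adding enough finite ramification to $B_0$ to force the relevant class field from the proof of Proposition \ref{typenumber} back down to $K_i$---and to verify that this restricted family is still infinite.
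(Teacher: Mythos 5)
Your proposal is essentially the paper's own proof: run the argument of Theorem \ref{locallysymmetricconstruction} with $\mathbb{Q}$ forced as the invariant trace field by primitivity, transfer the invariant algebra $B_\Gamma$ from $K_1$ to $K_2$ via Proposition \ref{samealgebras} (here the base-changed algebras are division algebras, so that proposition applies directly), and transport maximal orders via Theorem \ref{maxorders}. The one step you leave as an expectation --- forcing type number one by adding finite ramification to $B_0$ --- is precisely what the paper carries out: it starts with $B_0$ ramified at two rational primes that split completely in $K_i$ (completely split, not merely unramified in $K_1K_2$ as you suggest, so that the base change remains a division algebra and distinct choices of $B_0$ remain non-isomorphic after base change), and then, if the class field $K_i(A)$ of Proposition \ref{typenumber} is strictly larger than $K_i$, adjoins further pairs of primes that split completely in $K_i$ but not in $K_i(A)$; each such enlargement of $\Ram(B_0)$ strictly shrinks the class field, so finiteness of $[K_i(A):K_i]$ forces termination at type number one, and both fields are handled simultaneously by taking $\Ram(B)$ to be the union of the ramification sets produced for $K_1$ and for $K_2$. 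Infinitude of the family then comes from varying the initial completely split primes, exactly as you propose.
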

\begin{proof}
We begin by proving that there are infinitely many quaternion division algebras $A_1$ defined over $K_1$ satisfying:
\begin{enumerate}[(1)]
\item $A_1$ is indefinite (i.e., some infinite prime of $K_1$ is unramified in $A_1$),
\item all maximal orders of $A_1$ are conjugate, and
\item there exists a quaternion algebra $B$ over $\mathbb Q$ such that $B\otimes_{\mathbb Q} K_1\cong A_1$.
\end{enumerate}

Let $p_1,p_2$ be rational primes which split completely in $K_1/\mathbb Q$ and define a quaternion division algebra $B$ over $\mathbb Q$ by $\Ram(B)=\{p_1,p_2\}$. Let $A=B\otimes_{\mathbb Q} K_1$. If $\omega$ is a prime of $K_1$ lying above one of the $p_i$ then $[(K_1)_\omega : \mathbb Q_{p_i}]=1$ (as $p_1$ splits completely in $K_1/\mathbb Q$), hence \[\inv_\omega(A)=\inv_\omega(B\otimes_{\mathbb Q} K_1)=[(K_1)_\omega : \mathbb Q_{p_i}]\cdot \inv_{p_i}(B)=\frac{1}{2}\in\mathbb Q/\mathbb Z,\] which shows that $\omega\in\Ram(A)$. In particular this shows that $A$ is a division algebra.

As was mentioned in the proof of Proposition \ref{typenumber}, if $A$ is a quaternion algebra over a number field $K$ which is unramified at an infinite prime of $K$ then Section 3 of \cite{L} implies that the number of conjugacy classes of maximal orders in $A$ is equal to the degree over $K$ of the maximal abelian extension $K(A)$ of $K$ which has a $2$-elementary Galois group, is unramified outside of the real places of $K$ which ramify in $A$, and in which all finite primes of $K$ that ramify in $A$ split completely. This shows that if $K_1(A)=K_1$ then the algebra $A$ satisfies conditions (1)-(3) above.

Suppose therefore that $K_1(A)\neq K_1$ and that $A$ does not satisfy (2) above. Let $p_3,p_4$ be rational primes (distinct from $p_1, p_2$) which split completely in $K_1/\mathbb Q$ but not in $K_1(A)/\mathbb Q$ and define a quaternion algebra $B_1$ over $\mathbb Q$ by $\Ram(B_1)=\{p_1,p_2,p_3,p_4\}$. Let $A_1=B_1\otimes_\mathbb Q K_1$. The definition of $K_1(A)$ and $K_1(A_1)$ implies that $K_1\subseteq K_1(A_1)\subsetneq K_1(A)$. If $K_1(A_1)=K_1$ then $A_1$ is the desired quaternion algebra. Otherwise we may continue this construction to obtain a quaternion algebra $A_2$ with \[K_1\subseteq K_1(A_2)\subsetneq K_1(A_1)\subsetneq K_1(A).\] Because the degree of $K_1(A)$ over $K_1$ is finite, we may continue this process some finite number of times so as to obtain a quaternion division algebra over $K_1$ satisfying conditions (1)-(3) above. 

The construction above shows that there exists a quaternion algebra $A_1'$ over $K_1$ satisfying conditions (1)-(3) above. Similarly, we may construct a quaternion algebra $A_2'$ over $K_2$ satisfying properties (1)-(3) as well (with $K_2$ in place of $K_1$). If $A_1'\cong B_1\otimes_{\mathbb Q} K_1$ and $A_2'\cong B_2\otimes_{\mathbb Q} K_2$ then define a quaternion algebra $B$ over $\mathbb Q$ by $\Ram(B)=\Ram(B_1)\cup \Ram(B_2)$. The class field containments used above show that if we define $A_1:=B\otimes_\mathbb Q K_1$ and $A_2:=B\otimes_\mathbb Q K_2$ then $K_1(A_1)=K_1$ and $K_2(A_2)=K_2$. In particular all of the maximal orders in $A_1$ (respectively $A_2$) are conjugate.

Let $(a,b)$ be the signature of $K_1$ (and hence of $K_2$ by Theorem \ref{thm:main}) so that \[A_1\otimes_\mathbb Q\mathbb R \cong \MM_2(\mathbb R)^a \times \MM_2(\mathbb C)^b\cong A_2\otimes_\mathbb Q\mathbb R .\] Let $G=\SL_2(\mathbb R)^a\times \SL_2(\mathbb C)^b$ and $K$ be a maximal compact subgroup of $G$. Let $\mathcal O_1$ be a maximal order of $A_1$ and $\mathcal O_2$ be a maximal order of $A_2$. Finally, define $M_1=\Gamma_{\mathcal O_1}\backslash G/K$ and $M_2=\Gamma_{\mathcal O_2}\backslash G/K$. We will show that $M_1$ and $M_2$ contain the same nonempty set of proper immersed totally geodesic surfaces. 

Suppose that $S$ is a totally geodesic surface in $M_1$ and denote by $\Gamma$ the orbifold fundamental group of $S$. Then $\Gamma\subset \Gamma_{\mathcal O_1}$. We will show that $\Gamma\subset \Gamma_{\mathcal O_2}$. Let $B_\Gamma$ denote the invariant quaternion algebra of $\Gamma$ and $F$ denote invariant trace field of $\Gamma$. The field $F$ is a totally real proper subfield of $K_1$, hence $F=\mathbb Q$ because $K_1$ is primitive. As it is necessarily the case that $B_\Gamma\otimes_F K_1\cong A_1$, we conclude by Theorem \ref{maxorders} the intersection $\mathcal O_{B_\Gamma}=\mathcal O_1 \cap {B_\Gamma}$ is a maximal order of ${B_\Gamma}$ for which $\mathcal O_{B_\Gamma}\otimes_{\mathbb Z} \mathcal O_{K_1} \cong \mathcal O_1$. It follows that $\Gamma \subset \Gamma_{\mathcal O_{B_\Gamma}} \subset \Gamma_{\mathcal O_1}$. 

We now show that (up to isomorphism) $\Gamma$ is contained in $\Gamma_{\mathcal O_2}$. Proposition \ref{samealgebras} shows that $B_\Gamma\otimes_F K_2\cong A_2$. Identifying $B_\Gamma$ with its image in $A_2$ we have that the maximal order $\mathcal O_{B_\Gamma}$ is contained in the order $\mathcal O_{B_\Gamma}\otimes_{\mathbb Z} \mathcal O_{K_2}$ of $A_2$. Because every maximal order of $K_2$ is conjugate (by Theorem \ref{typenumber}), we conclude that (perhaps upon conjugating $\mathcal O_{B_\Gamma}$) we have $\mathcal O_{B_\Gamma} \subset \mathcal O_2$, hence $\Gamma_{\mathcal O_{B_\Gamma}} \subset \Gamma_{\mathcal O_2}$. As $\Gamma \subset \Gamma_{\mathcal O_{B_\Gamma}}$, this proves that $\Gamma\subset \Gamma_{\mathcal O_2}$. This shows that every proper immersed totally geodesic surface of $M_1$ is also a proper immersed totally geodesic surface of $M_2$. Repeating the same argument with the roles of $M_1$ and $M_2$ reversed completes the proof of the theorem.

\end{proof}


\end{document}